\documentclass{article}
\usepackage[T1]{fontenc}
\usepackage[utf8]{inputenc}
\usepackage{fancyhdr}
\usepackage[toc,page]{appendix}
\usepackage{mathrsfs}
\usepackage[mathscr]{euscript}
\usepackage{graphicx}
\usepackage[dvipsnames]{xcolor}
\usepackage{tikz}
\usepackage{amssymb}
\usepackage{comment}
\usepackage{amsmath}
\usepackage{caption}
\usepackage{subcaption}
\usepackage{amsthm}
\usepackage{enumitem}
\usepackage[english]{babel}
\usepackage{csquotes}
\usepackage{hyperref}
\hypersetup{colorlinks=true}
\usepackage[capitalise]{cleveref}
\usepackage[a4paper,top=3cm,bottom=2cm,left=4cm,right=4cm,marginparwidth=1.75cm]{geometry}
\usepackage{placeins}

\definecolor{lime}{HTML}{A6CE39}
\DeclareRobustCommand{\orcidicon}{%
	\begin{tikzpicture}
	\draw[lime, fill=lime] (0,0) 
	circle [radius=0.16] 
	node[white] {{\fontfamily{qag}\selectfont \tiny ID}};
	\draw[white, fill=white] (-0.0625,0.095) 
	circle [radius=0.007];
	\end{tikzpicture}
	\hspace{-2mm}
}

\foreach \x in {A, ..., Z}{%
	\expandafter\xdef\csname orcid\x\endcsname{\noexpand\href{https://orcid.org/\csname orcidauthor\x\endcsname}{\noexpand\orcidicon}}
}

\usetikzlibrary{arrows, positioning,shapes.misc,calc,shapes.geometric}

 \tikzset{domino/.style={draw=red, rectangle, minimum height=1cm, minimum width=2cm}}
 \tikzset{bar/.style={draw=blue, rectangle, minimum height=1cm, minimum width=3cm}}
 \tikzset{quadrato/.style={draw=ForestGreen, rectangle, minimum height=1cm, minimum width=1cm}}



\renewcommand{\a}{\textbf{a}}
\renewcommand{\b}{\textbf{b}}
\renewcommand{\c}{\textbf{c}}


\theoremstyle{plain} 
\newtheorem{thm}{Theorem}[section]

\newtheorem{cor}[thm]{Corollary} 
\newtheorem{prop}[thm]{Proposition}

\theoremstyle{definition} 
\newtheorem{defn}{Definition}[section]
\newtheorem{oss}{Remark}
\newtheorem{expl}{Example}[section]

\title{Combinatorial properties of multidimensional continued fractions}
\author{Michele Battagliola, Nadir Murru, Giordano Santilli \\
University of Trento, Department of Mathematics}

\date{}

\begin{document}

\maketitle

\begin{abstract}

The study of combinatorial properties of mathematical objects is a very important research field and continued fractions have been deeply studied in this sense. However, multidimensional continued fractions, which are a generalization arising from an algorithm due to Jacobi, have been poorly investigated in this sense, up to now. In this paper, we propose a combinatorial interpretation of the convergents of multidimensional continued fractions in terms of counting some particular tilings, generalizing some results that hold for classical continued fractions.

\end{abstract}

\section{Introduction}

Multidimensional continued fractions were introduced by Jacobi \cite{Jac} (and then generalized by Perron \cite{Per}) in the attempt to answer a problem posed by Hermite \cite{Her} who asked for an algorithm that provides periodic representations for algebraic irrationals of any degree, in the same way as continued fractions are periodic if and only if they represent quadratic irrationals. Unfortunately, the Jacobi--Perron algorithm does not solve the problem, which is still a beautiful open problem in number theory, but opened a new and rich research field. Indeed, there are many studies about multidimensional continued fractions and their modifications, aiming to generalize the results and properties of classical continued fractions.

Continued fractions have been widely studied from different points of view.
Several works explore the combinatorial properties of continued fractions giving many interesting interpretations. In the book of Benjamin and Quinn \cite{BQ03}, one chapter is devoted to continued fractions showing that numerators and denominators of convergents count some particular tilings, reporting also some results proved in \cite{BSQ00}. In \cite{Balof}, the author provided further results regarding the properties of continued fractions in terms of counting tilings and giving also a combinatorial interpretation to the expansion of $e$. A different approach to the combinatorial aspects of continued fractions is given in \cite{Flajolet}, where they are connected to some labelled paths. Recently, in \cite{CS20}, the authors described a combinatorial interpretation of continued fractions as quotients of the number of perfect matchings of snake graphs. Further interesting works in this field are \cite{BS22, EK13, F82, Mendez21, PAN12, PS21, Shin10, SZ18}.

Regarding multidimensional continued fractions, there are just few works about their combinatorial properties. In \cite{Ito94}, the Jacobi--Perron algorithm is used for giving a generating method of the so-called stepped surfaces. In \cite{Berthe}, the authors used multidimensional continued fractions for obtaining a method of generation of discrete segments in the three-dimensional space. Finally, in \cite{AB01}, multidimensional continued fractions have been exploited for obtaining results about tilings, discrete approximations of lines and planes,
and Markov partitions for toral automorphism. 

In this paper, we propose an elementary approach to the study of combinatorial properties of multidimensional continued fractions, obtaining a natural interpretation in terms of counting tilings of a board using tiles of length one, two or three, where we can also stack such tiles. We also give an interpretation to negative conditions for the height of the stacks. In particular, Section \ref{sec:pre} is devoted to the preliminary definitions and properties of multidimensional continued fractions, where we also introduce them from a formal point of view. Section \ref{SectionCountingBase} presents the main results.

\section{Preliminaries}
\label{sec:pre}
Multidimensional continued fractions (of degree two) represent a pair of real numbers $(\alpha_0, \beta_0)$ by means of two sequences of integers $(a_i)_{i \geq 0}$, $(b_i)_{i \geq 0}$ as follows:
\[
\alpha_0 = a_0+\cfrac{b_1+\cfrac{1}{a_2+\cfrac{b_3+\cfrac{1}{\ddots}}{a_3+\cfrac{\ddots}{\ddots}}}}{a_1+\cfrac{b_2+\cfrac{1}{a_3+\cfrac{\ddots}{\ddots}}}{a_2+\cfrac{b_3+\cfrac{1}{\ddots}}{a_3+\cfrac{\ddots}{\ddots}}}}, \quad  \beta_0 = b_0 + \cfrac{1}{a_1+\cfrac{b_2+\cfrac{1}{a_3+\cfrac{\ddots}{\ddots}}}{a_2+\cfrac{b_3+\cfrac{1}{\ddots}}{a_3+\cfrac{\ddots}{\ddots}}}}
\]
where the $a_i$'s and $b_i$'s are called \emph{partial quotients} and they can be obtained by the Jacobi algorithm in the following way:
\[
\begin{cases}
\alpha_i = \lfloor a_i \rfloor \cr
\beta_i = \lfloor b_i \rfloor \cr
\alpha_{i+1} = \cfrac{1}{\beta_i - b_i} \cr
\beta_{i+1} = \cfrac{\alpha_i - a_i}{\beta_i - b_i}
\end{cases} 
\quad
i = 0, 1, 2, \ldots
\]
We can introduce multidimensional continued fractions also in a formal way, where the partial quotients are not in general obtained by an algorithm and the numerators are not necessarily equals to 1, as well as in the classical case, given two sequences $(a_i)_{i \geq 0}$, $(b_i)_{i \geq 0}$, one can introduce and study the continued fraction
\[ a_0 + \cfrac{b_1}{a_1 + \cfrac{b_2}{a_2 + \ddots}}. \]

\begin{defn}
Given the sequences of integers $(a_i)_{i \geq 0}$, $(b_i)_{i \geq 0}$ and $(c_i)_{i \geq 0}$ (with $c_0 = 1$), called \emph{partial quotients}, we define the multidimensional continued fraction (MCF) as the following couple of objects:
\begin{equation} \label{eq:MCF}
a_0+\cfrac{b_1+\cfrac{c_2}{a_2+\cfrac{b_3+\cfrac{c_4}{\ddots}}{a_3+\cfrac{\ddots}{\ddots}}}}{a_1+\cfrac{b_2+\cfrac{c_3}{a_3+\cfrac{\ddots}{\ddots}}}{a_2+\cfrac{b_3+\cfrac{c_4}{\ddots}}{a_3+\cfrac{\ddots}{\ddots}}}}, \quad b_0 + \cfrac{c_1}{a_1+\cfrac{b_2+\cfrac{c_3}{a_3+\cfrac{\ddots}{\ddots}}}{a_2+\cfrac{b_3+\cfrac{c_4}{\ddots}}{a_3+\cfrac{\ddots}{\ddots}}}}. 
\end{equation}
In the following, we will write shortly $[(a_0, a_1, \ldots), (b_0, b_1, \ldots), (1, c_1, \ldots)]$ for such a MCF.\\
We also call \emph{complete quotients} the elements of the sequences of real numbers $(\alpha_i)_{i \geq 0}$, $(\beta_i)_{i \geq 0}$ and $(\gamma_i)_{i \geq 0}$ defined by the following relations:
\[ \alpha_i = a_i + \cfrac{\beta_{i+1}}{\alpha_{i+1}}, \quad \beta_i = b_i + \cfrac{\gamma_{i+1}}{\alpha_{i+1}}, \quad \gamma_i = c_i \]
for $i = 0, 1, 2, \ldots$, so that
\((\alpha_0, \beta_0) = [(a_0, a_1, \ldots), (b_0, b_1, \ldots), (1, c_1, \ldots)].\)
\end{defn}

In the following, we will use $\a_i^j$ for denoting the finite sequence $(a_i, a_{i+1}, \ldots, a_j)$, for $i \leq j$ integers. Thus, using this notation the finite MCF $$[(a_0, a_1, \ldots a_n), (b_0, b_1, \ldots, b_n), (1, c_1, \ldots, c_n)]$$ can be also written as $[\a_0^n, \b_0^n, \c_0^n]$

We define the $n$--th convergent of a MCF, similarly to the convergents of classical continued fractions, as the following pair of rationals:\\
\[ \left( \frac{A(\a_0^n, \b_0^n, \c_0^n)}{C(\a_0^n, \b_0^n, \c_0^n)}, \frac{B(\a_0^n, \b_0^n, \c_0^n)}{C(\a_0^n, \b_0^n, \c_0^n)} \right) := [\a_0^n, \b_0^n, \c_0^n]. \]
Sometimes, when there is no possibility of confusion, we will also use the notation $\left( \cfrac{A_n}{C_n}, \cfrac{B_n}{C_n} \right)$ without making explicit the dependence on the partial quotients.

\begin{oss} \label{rem:mat}
We would like to observe that the partial quotient $c_0$ does not appear in the expansion of the MCF described in \eqref{eq:MCF}. However, we set it equals to 1 because the convergents can be also evaluated in the following way:
\[ \begin{pmatrix} a_0 & 1 & 0 \cr b_0 & 0 & 1 \cr c_0 & 0 & 0 \end{pmatrix} \cdots \begin{pmatrix} a_n & 1 & 0 \cr b_n & 0 & 1 \cr c_n & 0 & 0 \end{pmatrix} = \begin{pmatrix} A_n & A_{n-1} & A_{n-2} \cr B_n & B_{n-1} & B_{n-2} \cr C_n & C_{n-1} & C_{n-2} \end{pmatrix}.\]
Since $\frac{A_0}{C_0} = a_0$ and $\frac{B_0}{C_0} = b_0$, it is a natural choice to set $c_0 = 1$. Moreover, we would like to highlight that $A_n$ does not depend on $b_0, c_0, c_1$ and $B_n$ does not depend on $a_0, b_1, c_0, c_2$.
\end{oss}

\begin{prop} \label{prop:conv}
Given the sequences of integers $(a_i)_{i \geq 0}$, $(b_i)_{i \geq 0}$, $(c_i)_{i \geq 0}$, then for all $n \geq 3$ we have
\begin{equation} \label{eq:conv1}
A(\a_0^n, \b_0^n, \c_0^n) = a_0 A(\a_1^n, \b_1^n, \c_1^n)+ b_1 A(\a_2^n, \b_2^n, \c_2^n) + c_2 A(\a_3^n, \b_3^n, \c_3^n) \end{equation}
and
\small
\begin{equation} \label{eq:conv2}
A(\a_0^n, \b_0^n, \c_0^n) = a_n A(\a_0^{n-1}, \b_0^{n-1}, \c_0^{n-1}) + b_n A(\a_0^{n-2}, \b_0^{n-2}, \c_0^{n-2}) + c_n A(\a_0^{n-3}, \b_0^{n-3}, \c_0^{n-3}).
\end{equation}
\normalsize
\end{prop}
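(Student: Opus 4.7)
The plan is to exploit the matrix identity of Remark~\ref{rem:mat}. Setting
\[ M_k := \begin{pmatrix} a_k & 1 & 0 \\ b_k & 0 & 1 \\ c_k & 0 & 0 \end{pmatrix}, \]
that remark says $M_0 M_1 \cdots M_n$ has first row $(A_n, A_{n-1}, A_{n-2})$ and first column $(A_n, B_n, C_n)^{\top}$. The same reasoning, applied after relabelling the starting index $0 \mapsto k$, shows that for every $k$ with $0 \leq k \leq n$ the first column of $M_k M_{k+1} \cdots M_n$ is $(A(\a_k^n, \b_k^n, \c_k^n),\, B(\a_k^n, \b_k^n, \c_k^n),\, C(\a_k^n, \b_k^n, \c_k^n))^{\top}$; this auxiliary fact is what I would record first. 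It is unambiguous because, as already observed in Remark~\ref{rem:mat}, the $A$-entry of such a product does not depend on the ``first'' $c$ of its arguments, so whether we treat $c_k$ as a free variable or normalise it to $1$ makes no difference.

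For \eqref{eq:conv2}, I would factor $M_0 \cdots M_n = (M_0 \cdots M_{n-1})\, M_n$. Since $n \geq 3$, Remark~\ref{rem:mat} gives that the first row of $M_0 \cdots M_{n-1}$ is $(A_{n-1}, A_{n-2}, A_{n-3})$, while the first column of $M_n$ is $(a_n, b_n, c_n)^{\top}$. Their scalar product equals the $(1,1)$-entry of the product, which is $A_n$, and this is precisely the right-hand side of \eqref{eq:conv2}.

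For \eqref{eq:conv1}, I would factor $M_0 \cdots M_n = M_0 \cdot M_1 \cdot M_2 \cdot (M_3 \cdots M_n)$ and peel one $M_k$ off the left at a time, exploiting that left-multiplication by $M_k$ sends the first column $(X, Y, Z)^{\top}$ of any matrix to $(a_k X + Y,\, b_k X + Z,\, c_k X)^{\top}$. Combined with the auxiliary fact above, this produces successively
\[ (M_2 \cdots M_n)_{3,1} = c_2\, A(\a_3^n, \b_3^n, \c_3^n), \qquad (M_1 \cdots M_n)_{2,1} = b_1\, A(\a_2^n, \b_2^n, \c_2^n) + c_2\, A(\a_3^n, \b_3^n, \c_3^n), \]
and finally
\[ A_n = (M_0 \cdots M_n)_{1,1} = a_0\, A(\a_1^n, \b_1^n, \c_1^n) + (M_1 \cdots M_n)_{2,1}, \]
which is exactly \eqref{eq:conv1}.

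The only step that is not a routine matrix computation is the auxiliary fact on shifted starting indices; this reduces to the observation that Remark~\ref{rem:mat} is a statement purely about products of matrices of the prescribed shape, with no condition tied to the index $0$. I expect no real obstacle beyond the bookkeeping of the $3 \times 3$ products, and the hypothesis $n \geq 3$ enters only so that $A(\a_0^{n-3}, \b_0^{n-3}, \c_0^{n-3})$ in \eqref{eq:conv2} and $A(\a_3^n, \b_3^n, \c_3^n)$ in \eqref{eq:conv1} correspond to genuine matrix products.
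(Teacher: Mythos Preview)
Your proof is correct and takes a genuinely different route from the paper's. The paper argues directly from the continued-fraction definition of the convergents: from
\[
\frac{A(\a_0^n,\ldots)}{C(\a_0^n,\ldots)}=a_0+\frac{B(\a_1^n,\ldots)}{A(\a_1^n,\ldots)},\qquad
\frac{B(\a_0^n,\ldots)}{C(\a_0^n,\ldots)}=b_0+c_1\frac{C(\a_1^n,\ldots)}{A(\a_1^n,\ldots)}
\]
it extracts the shift relations $C(\a_0^n,\ldots)=A(\a_1^n,\ldots)$, $B(\a_0^n,\ldots)=b_0A(\a_1^n,\ldots)+c_1C(\a_1^n,\ldots)$ and $A(\a_0^n,\ldots)=a_0A(\a_1^n,\ldots)+B(\a_1^n,\ldots)$, and substitutes to obtain \eqref{eq:conv1}; it then proves \eqref{eq:conv2} by induction on $n$, applying the inductive hypothesis to each of the three shifted numerators appearing in \eqref{eq:conv1} and recombining via \eqref{eq:conv1} again. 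Your approach instead takes Remark~\ref{rem:mat} as the primitive and reads both identities off as $(1,1)$-entries of associative factorisations of $M_0\cdots M_n$, peeling from the right for \eqref{eq:conv2} and from the left for \eqref{eq:conv1}. This is shorter and sidesteps the induction entirely, at the price of relying on the matrix identity (stated but not proved in the paper); the paper's argument is more self-contained from the definition and yields as a by-product the intermediate relations \eqref{RicorsioneBase} for $B$ and $C$, which are reused in the proof of Theorem~\ref{TeoFrazioniTiling}.
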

\begin{proof}

By definition we have that
\begin{equation*}
        \frac{A(\a_0^n, \b_0^n, \c_0^n)}{C(\a_0^n, \b_0^n, \c_0^n)}=a_0+\frac{B(\a_1^n, \b_1^n, \c_1^n)}{A(\a_1^n, \b_1^n, \c_1^n)}, \quad \frac{B(\a_0^n, \b_0^n, \c_0^n)}{C(\a_0^n, \b_0^n, \c_0^n)}=b_0+c_1\frac{C(\a_1^n, \b_1^n, \c_1^n)}{A(\a_1^n, \b_1^n, \c_1^n)}.
\end{equation*}
Thus, we have the following equalities:
    \begin{flalign} \label{RicorsioneBase}
        C(\a_0^n, \b_0^n, \c_0^n) &= A(\a_1^n, \b_1^n, \c_1^n)\nonumber\\
        B(\a_0^n, \b_0^n, \c_0^n) &= c_1C(\a_1^n, \b_1^n, \c_1^n) +b_0A(\a_1^n, \b_1^n, \c_1^n)\\
        A(\a_0^n, \b_0^n, \c_0^n) &= a_0A(\a_1^n, \b_1^n, \c_1^n) +B(\a_1^n, \b_1^n, \c_1^n).\nonumber
    \end{flalign}
By substitution we get
    \begin{align*}\label{EquazioneContoTassellamentiBase}
        A(\a_0^n, \b_0^n, \c_0^n) &= a_0A(\a_1^n, \b_1^n, \c_1^n)+b_1A(\a_2^n, \b_2^n, \c_2^n)+c_2A(\a_3^n, \b_3^n, \c_3^n).
    \end{align*}
Equation \eqref{eq:conv2} can be proved by induction. The basis of the induction is trivial. By inductive hypothesis we have that
    \begin{align*}
     A(\a_1^n, \b_1^n, \c_1^n) &= a_nA(\a_1^{n-1}, \b_1^{n-1}, \c_1^{n-1})+b_nA(\a_1^{n-2}, \b_1^{n-2}, \c_1^{n-2})+c_nA(\a_1^{n-3}, \b_1^{n-3}, \c_1^{n-3})
    \end{align*}
      \begin{align*}
     A(\a_2^n, \b_2^n, \c_2^n) &= a_nA(\a_2^{n-1}, \b_2^{n-1}, \c_2^{n-1})+b_nA(\a_2^{n-2}, \b_2^{n-2}, \c_2^{n-2})+c_nA(\a_2^{n-3}, \b_2^{n-3}, \c_2^{n-3})
    \end{align*}
    \begin{align*}
     A(\a_3^n, \b_3^n, \c_3^n) &= a_nA(\a_3^{n-1}, \b_3^{n-1}, \c_3^{n-1})+b_nA(\a_3^{n-2}, \b_3^{n-2}, \c_3^{n-2})+c_nA(\a_3^{n-3}, \b_3^{n-3}, \c_3^{n-3})
    \end{align*}
    Substituting and factoring out $a_n,b_n$ and $c_n$ we get
      \begin{align*}
      &A(\a_0^n, \b_0^n, \c_0^n) = a_n \cdot (a_0A(\a_1^{n-1}, \b_1^{n-1}, \c_1^{n-1})+b_1A(\a_2^{n-1}, \b_2^{n-1}, \c_2^{n-1})+c_2A(\a_3^{n-1}, \b_3^{n-1}, \c_3^{n-1})+\\
      &+b_n \cdot (a_0A(\a_1^{n-2}, \b_1^{n-2}, \c_1^{n-2})+b_1A(\a_2^{n-2}, \b_2^{n-2}, \c_2^{n-2})+c_2A(\a_3^{n-2}, \b_3^{n-2}, \c_3^{n-2}))+\\
      &+c_n \cdot (a_0A(\a_1^{n-3}, \b_1^{n-3}, \c_1^{n-3})+b_1A(\a_2^{n-3}, \b_2^{n-3}, \c_2^{n-3})+c_2A(\a_3^{n-3}, \b_3^{n-3}, \c_3^{n-3})).
    \end{align*}
    Finally, by using again \eqref{eq:conv1} we get the thesis. 
\end{proof}

\section{Counting the number of tilings using multidimensional continued fractions}\label{SectionCountingBase}

In this section, we give a combinatorial interpretation to the convergents of a MCF in terms of tilings of some boards, extending the approaches of Benjamin \cite{BQ03, BSQ00} and Balof \cite{Balof} for the classical continued fractions. 

In the following, a $(n+1)$--board is a $1 \times (n+1)$ chessboard, a \textit{square} is a $1 \times 1$ tile, a \textit{domino} is a $1 \times 2$ tile and a \textit{bar} is a $1 \times 3$ tile. The $n + 1$ cells of the $(n+1)$--board are labeled from 0 to $n$ (i.e., we refer to the cell of position 0 for the first cell and so on).
A tiling of a $n$--board is a covering using squares, dominoes and bars that can be also stacked. In particular, the height conditions for stacking them are given by finite sequences like $(\a_0^{n}, \b_0^{n}, \c_0^{n})$, where
\begin{itemize}
    \item the element $a_i$ of $\a_0^{n}$ denotes the number of stackable squares in the $i$--th position (e.g., $a_0$ is the number of stackable squares in the cell of position 0 of the $(n+1)$--board);
    \item the element $b_i$ of $\b_0^{n}$ denotes the number of stackable dominoes covering the positions $i-1$ and $i$ (e.g., $b_1$ is the number of stackable dominoes covering the positions 0 and 1 of the $(n+1)$--board);
    \item the element $c_i$ of $\c_0^{n}$ denotes the number of stackable bars covering the positions $i-2$, $i-1$ and $i$ (e.g., $c_2$ is the number of stackable dominoes covering the positions 0, 1, and 2 of the $(n+1)$--board).
\end{itemize}
At first glance the first element in $\b_0^{n}$ does not give height conditions, as well as the first two elements of $\c_0^{n}$, however their role will be important later when discussing different types of tilings.
We will denote by $M(\a_0^{n}, \b_0^{n}, \c_0^{n})$ the number of possible tilings of a $(n+1)$--board with height conditions $(\a_0^{n}, \b_0^{n}, \c_0^{n})$.

\begin{expl}
Consider $n=5$ with the following height conditions $$\a_0^5=(1,2,3,2,3,2), \quad \b_0^5=(-,6,5,4,3,2), \quad \c_0^5=(-,-,1,2,3,1),$$ 
where we do not explicit the values of $b_0, c_0, c_1$ since, in this case, they are not relevant for the tilings.
Examples of valid tilings are represented in \cref{fig:ex2}, while in \cref{fig:ex5} is represented a non-valid tiling for these height conditions: in this case there are too many bars in the last tiles.
\FloatBarrier

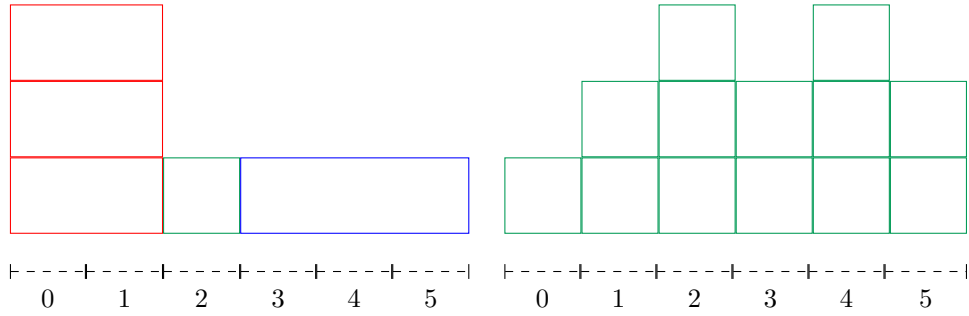
\begin{figure}[!hbt]
    \centering
\begin{subfigure}{.5\textwidth}
   \begin{tikzpicture}
\node[domino] (a) {} ;
\node[domino, above= 0cm of a] (a2) {} ;
\node[domino,above= 0cm of a2] (a3) {} ;
\node[quadrato, right =0cm of a] (b) {} ;
\node[bar, right =0cm of b] (c) {} ;

\node [below=0.5cm of a.south west] (inizio) {}; 
\node [right=1cm of inizio.west] (n1) {}; 
\node [below=0.5cm of a.south east] (n2) {}; 
\node [below=0.5cm of b.south east] (n3) {}; 
\node [right=1cm of n3.west] (n4) {}; 
\node [right=1 cm of n4.west] (n5) {}; 
\node [below=0.5cm of c.south east] (fine) {}; 
\draw[|-|,dashed] (inizio.north) -- node[label=below:$0$]{} (n1.north);
\draw[|-|,dashed] (n1.north)-- node[label=below:$1$]{} (n2.north); 
\draw[|-|,dashed] (n2.north)-- node[label=below:$2$]{}(n3.north);
\draw[|-|,dashed] (n3.north)-- node[label=below:$3$]{}(n4.north);
\draw[|-|,dashed] (n4.north)-- node[label=below:$4$]{}(n5.north); 
\draw[|-|,dashed] (n5.north)-- node[label=below:$5$]{}(fine.north); 

\end{tikzpicture}
\end{subfigure}%
\hfill
\begin{subfigure}{.5\textwidth}
  \begin{tikzpicture}
\node[quadrato] (a) {} ;
\node[quadrato, right =0cm of a] (b) {};
\node[quadrato, above =0cm of b] (b2) {} ;
\node[quadrato, right =0cm of b] (c) {} ;
\node[quadrato, above =0cm of c] (c2) {} ;
\node[quadrato, above =0cm of c2] (c3) {} ;
\node[quadrato, right =0cm of c] (d) {} ;
\node[quadrato, above =0cm of d] (d2) {} ;
\node[quadrato, right =0cm of d] (e) {} ;
\node[quadrato, above =0cm of e] (e2) {} ;
\node[quadrato, above =0cm of e2] (e3) {} ;
\node[quadrato, right =0cm of e] (f) {} ;
\node[quadrato, above =0cm of f] (f2) {} ;

\node [below=0.5cm of a.south west] (inizio) {}; 
\node [right=1cm of inizio.west] (n1) {}; 
\node [right=1cm of n1.west] (n2) {}; 
\node [right=1cm of n2.west] (n3) {}; 
\node [right=1cm of n3.west] (n4) {}; 
\node [right=1 cm of n4.west] (n5) {}; 
\node [below=0.5cm of f.south east] (fine) {}; 
\draw[|-|,dashed] (inizio.north) -- node[label=below:$0$]{} (n1.north);
\draw[|-|,dashed] (n1.north)-- node[label=below:$1$]{} (n2.north); 
\draw[|-|,dashed] (n2.north)-- node[label=below:$2$]{}(n3.north);
\draw[|-|,dashed] (n3.north)-- node[label=below:$3$]{}(n4.north);
\draw[|-|,dashed] (n4.north)-- node[label=below:$4$]{}(n5.north); 
\draw[|-|,dashed] (n5.north)-- node[label=below:$5$]{}(fine.north); 

\end{tikzpicture}
\end{subfigure}%
    \caption{Examples of possible tilings.}
    \label{fig:ex2}
\end{figure}

\begin{figure}[!hbt]
    \centering
   \begin{tikzpicture}
\node[domino] (a) {} ;
\node[domino, above= 0cm of a] (a2) {} ;
\node[quadrato, right =0cm of a] (b) {} ;
\node[quadrato, above =0cm of b] (b2) {} ;
\node[quadrato, above =0cm of b2] (b3) {} ;
\node[bar, right =0cm of b] (c) {} ;
\node[bar, above =0cm of c] (c2) {} ;

\node [below=0.5cm of a.south west] (inizio) {}; 
\node [right=1cm of inizio.west] (n1) {}; 
\node [below=0.5cm of a.south east] (n2) {}; 
\node [below=0.5cm of b.south east] (n3) {}; 
\node [right=1cm of n3.west] (n4) {}; 
\node [right=1 cm of n4.west] (n5) {}; 
\node [below=0.5cm of c.south east] (fine) {}; 
\draw[|-|,dashed] (inizio.north) -- node[label=below:$0$]{} (n1.north);
\draw[|-|,dashed] (n1.north)-- node[label=below:$1$]{} (n2.north); 
\draw[|-|,dashed] (n2.north)-- node[label=below:$2$]{}(n3.north);
\draw[|-|,dashed] (n3.north)-- node[label=below:$3$]{}(n4.north);
\draw[|-|,dashed] (n4.north)-- node[label=below:$4$]{}(n5.north); 
\draw[|-|,dashed] (n5.north)-- node[label=below:$5$]{}(fine.north); 

\end{tikzpicture}
    \caption{Example of a non-admissible tiling.}
    \label{fig:ex5}
\end{figure}
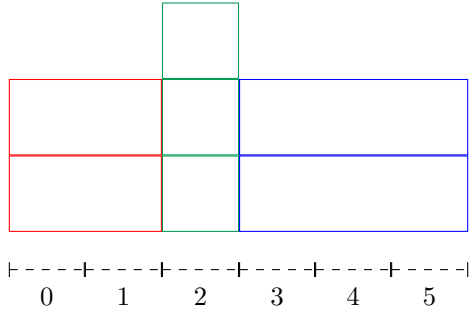

\FloatBarrier

\end{expl}

\begin{thm}\label{TeoFrazioniTiling}
Let  $\left( \frac{A(\a_0^n, \b_0^n, \c_0^n)}{C(\a_0^n, \b_0^n, \c_0^n)}, \frac{B(\a_0^n, \b_0^n, \c_0^n)}{C(\a_0^n, \b_0^n, \c_0^n)} \right) := [\a_0^n, \b_0^n, \c_0^n]$. Then we have the following:
\begin{itemize}
    \item $A(\a_0^n, \b_0^n, \c_0^n)$ counts the number of possible tilings of a $(n+1)$--board with height conditions $(\a_0^n, \b_0^n, \c_0^n)$. 
    \item $B(\a_0^n, \b_0^n, \c_0^n)$  counts the number of possible tilings of a $(n+2)$--board, where only in this case the first cell is labelled with -1 (i.e., we add a cell on the left to a $(n+1)$--board), with height conditions $(\a_0^{n}, \b_0^{n}, \c_0^{n})$, such that the first tile of the tiling is a domino or a bar. 
    \item $C(\a_0^n, \b_0^n, \c_0^n)$ counts the number of possible tilings of a $n$--board with height conditions $(\a_1^n, \b_1^n, \c_1^n)$.
 
\end{itemize}
\end{thm}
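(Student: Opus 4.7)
The plan is to prove the three items in the order \textbf{Part 1}, \textbf{Part 3}, \textbf{Part 2}, leveraging the recurrences established in Proposition~\ref{prop:conv} and equation~\eqref{RicorsioneBase} from its proof.

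For \textbf{Part 1}, I would proceed by induction on $n$, letting $M(\a_0^n, \b_0^n, \c_0^n)$ denote the number of admissible tilings. The base cases $n=0,1,2$ are verified by direct enumeration: the $1$--board admits only stacks of a square, giving $a_0$ options; the $2$--board gives $a_0 a_1 + b_1$ options; the $3$--board gives $a_0 a_1 a_2 + a_0 b_2 + a_2 b_1 + c_2$; and in all three cases the count agrees with $A$ on the same indices (as can be checked from the matrix formulation in Remark~\ref{rem:mat}). For the inductive step $n \geq 3$, I would condition on the tile covering the rightmost cell of the $(n+1)$--board (position $n$). This tile must be (i) a square at position $n$, contributing $a_n$ stacking choices and leaving an $n$--board with conditions $(\a_0^{n-1}, \b_0^{n-1}, \c_0^{n-1})$; (ii) a domino covering positions $n-1$ and $n$, contributing $b_n$ choices and leaving an $(n-1)$--board; or (iii) a bar covering positions $n-2, n-1, n$, contributing $c_n$ choices and leaving an $(n-2)$--board. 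Summing yields the recurrence $M(\a_0^n, \b_0^n, \c_0^n) = a_n M(\a_0^{n-1}, \b_0^{n-1}, \c_0^{n-1}) + b_n M(\a_0^{n-2}, \b_0^{n-2}, \c_0^{n-2}) + c_n M(\a_0^{n-3}, \b_0^{n-3}, \c_0^{n-3})$, which is exactly \eqref{eq:conv2}. Combined with the matching base cases, this gives $M = A$ for all $n$.

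\textbf{Part 3} follows immediately: the identity $C(\a_0^n, \b_0^n, \c_0^n) = A(\a_1^n, \b_1^n, \c_1^n)$ is already established in \eqref{RicorsioneBase}, and by Part 1 applied to the shifted sequences $(\a_1^n, \b_1^n, \c_1^n)$, this right-hand side counts the tilings of an $n$--board with those conditions.

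For \textbf{Part 2}, I would reason directly on the extended $(n+2)$--board (cells labelled $-1, 0, 1, \ldots, n$) with the constraint that the leftmost tile is either a domino or a bar. Splitting on this first tile: if it is a domino covering positions $-1$ and $0$, then by the indexing convention $b_i$ counts dominoes ending at position $i$, so the choice contributes $b_0$ stacking options, and the remainder is an $n$--board on cells $1, \ldots, n$ with conditions $(\a_1^n, \b_1^n, \c_1^n)$, giving $A(\a_1^n, \b_1^n, \c_1^n)$ tilings by Part 1; if it is a bar covering $-1, 0, 1$, then by analogy it contributes $c_1$ stacking options, leaving an $(n-1)$--board on cells $2, \ldots, n$ with conditions $(\a_2^n, \b_2^n, \c_2^n)$, giving $A(\a_2^n, \b_2^n, \c_2^n)$ tilings. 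The total is $b_0 A(\a_1^n, \b_1^n, \c_1^n) + c_1 A(\a_2^n, \b_2^n, \c_2^n)$, which by \eqref{RicorsioneBase} combined with Part 3 equals $b_0 A(\a_1^n, \b_1^n, \c_1^n) + c_1 C(\a_1^n, \b_1^n, \c_1^n) = B(\a_0^n, \b_0^n, \c_0^n)$, as desired.

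The main obstacle is bookkeeping rather than depth: I must take care that the indexing of the partial quotients $b_i$ and $c_i$ (which are tied to the \emph{rightmost} cell covered by a domino or a bar) correctly aligns with the boundary tiles on the extended board used in Part~2, so that the leftmost domino uses $b_0$ and the leftmost bar uses $c_1$ — exactly the coefficients appearing in \eqref{RicorsioneBase} — and that the induction in Part~1 is launched from three separate base cases $n=0,1,2$, because the recurrence \eqref{eq:conv2} is stated only for $n\geq 3$ and cannot reach back to negative indices.
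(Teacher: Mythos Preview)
Your proof is correct and close in spirit to the paper's, but there are two genuine differences worth noting. For Part~1, the paper conditions on the \emph{leftmost} tile, obtaining the recurrence
\[
M(\a_0^n,\b_0^n,\c_0^n)=a_0M(\a_1^n,\b_1^n,\c_1^n)+b_1M(\a_2^n,\b_2^n,\c_2^n)+c_2M(\a_3^n,\b_3^n,\c_3^n),
\]
and then matches it with~\eqref{eq:conv1}; you instead condition on the \emph{rightmost} tile and match with~\eqref{eq:conv2}. The two are entirely symmetric and equally valid; the paper's choice has the mild advantage that~\eqref{eq:conv1} is proved first and directly in Proposition~\ref{prop:conv}, whereas~\eqref{eq:conv2} already required an induction there. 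For Part~2, the paper runs a second induction, checking the base cases $B_{-1}=0$, $B_0=b_0$, $B_1=b_0a_1+c_1$ against the tiling count and then invoking the same recurrence argument. Your direct decomposition on the first (domino or bar) tile of the extended board, combined with Part~1 on the remainder and the identity from~\eqref{RicorsioneBase}, is cleaner and avoids the extra induction; just be aware that for $n=0$ the bar case is vacuous and the ``empty board'' convention $A(\text{empty})=1$ is implicitly used, which you may want to state explicitly.
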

\begin{proof}
    We want to show that the number of tilings $M(\a_0^n, \b_0^n, \c_0^n)$ and $A(\a_0^n, \b_0^n, \c_0^n)$ have the same initial values and recurrence formula. 
    Clearly, for a $1$--board we have
    $$M(a_0, b_0, c_0)= a_0 = A(a_0, b_0, c_0),$$ 
    and for a $2$--board
    $$M(\a_0^1,\b_0^1,\c_0^1) = a_0 a_1 + b_1 = A(\a_0^1,\b_0^1,\c_0^1).$$ 
    Then for a $3$--board we can have tilings with $3$ stacks of squares, or $1$ stack of squares in the first position and $1$ stack of dominoes in the second and third position, or one stack of dominoes in the first and second position and one stack of squares in the third position, or $1$ stack of bars:
    $$M(\a_0^2,\b_0^2,\c_0^2) = a_0a_1a_2 + a_0b_2 + a_2b_1 + c_2 = A(\a_0^2,\b_0^2,\c_0^2).$$
    
    For a $(n+1)$--board, with $n > 2$, we can observe that the number of tilings satisfies the following recursive formula:
    $$ M(\a_0^n, \b_0^n, \c_0^n) = a_0M(\a_1^n, \b_1^n, \c_1^n)+b_1M(\a_2^n, \b_2^n, \c_2^n)+c_1M(\a_3^n, \b_3^n, \c_3^n),$$
    since we can count the tilings dividing them in three sets: tilings that start with a stack of squares, tilings that start with a stack of dominoes and tilings that start with a stack of bars. Thus, the number of tilings of a $(n+1)$--board starting with a stack of squares is $a_0 M(\a_1^n, \b_1^n, \c_1^n)$ and similarly for the other two situations. So we have the first point. The third point follows immediately from the first equality in \eqref{RicorsioneBase}.
    
    About the second point we can observe that if the board has only one cell (i.e. the -1 cell) there are no possible tilings ($a_{-1} $ is implicitly set at 0), and this is consistent with $B_{-1} = 0$ (see the matricial representation of the convergents in \cref{rem:mat}. Moreover, $M(\a_{-1}^0, \b_{-1}^0, \c_{-1}^0) = b_0 = B(a_0, b_0, c_0)$, since we can tile the $2$--board only with a domino. Similarly, $M(\a_{-1}^1, \b_{-1}^1, \c_{-1}^1) = b_0a_1 + c_1 = B(\a_0^1, \b_0^1, \c_0^1)$, because we only have two possibilities: a tile composed by one domino and one square or a tile composed by one bar. Now, we can complete the proof by induction with the same argument used above.

\end{proof}

With the same notation as \cref{TeoFrazioniTiling} we have the following corollary for a $(n+1)$--circular board, which is a $(n+1)$--board where the first and last tile are bordering.

\begin{cor}
 The number of tilings of a $n+1$--circular board with height condition $(\a_0^n, \b_0^n, \c_0^n)$ with $c_0=0$ (i.e. we forbid bars covering the cells $0,n,n-1$) is $A(\a_0^n, \b_0^n, \c_0^n)+B(\a_0^{n-1}, \b_0^{n-1}, \c_0^{n-1})$
\end{cor}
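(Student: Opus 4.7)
The plan is to partition the tilings of the $(n+1)$--circular board into two disjoint classes, depending on whether any single tile crosses the ``seam'' between cells $n$ and $0$. Since the two classes are clearly disjoint and exhaustive, showing that they are counted by $A(\a_0^n, \b_0^n, \c_0^n)$ and $B(\a_0^{n-1}, \b_0^{n-1}, \c_0^{n-1})$ respectively will give the claim.

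For the first class, a tiling in which no tile wraps around the seam is in bijection with a tiling of the linear $(n+1)$--board with exactly the same height conditions $(\a_0^n, \b_0^n, \c_0^n)$, so by the first bullet of \cref{TeoFrazioniTiling} this class has size $A(\a_0^n, \b_0^n, \c_0^n)$.

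For the second class, I would enumerate the possible wrap-around tiles by reading the indices of the height conditions cyclically modulo $n+1$: the candidates are a domino covering the pair $(n,0)$, with $b_0$ possible stacks; a bar covering $(n-1,n,0)$, with $c_0$ stacks (forbidden by hypothesis); and a bar covering $(n,0,1)$, with $c_1$ stacks. If the wrap tile is the domino, the remaining cells $1,\dots,n-1$ form a linear $(n-1)$--board with conditions $(\a_1^{n-1}, \b_1^{n-1}, \c_1^{n-1})$, contributing $b_0\cdot A(\a_1^{n-1}, \b_1^{n-1}, \c_1^{n-1})$ tilings by \cref{TeoFrazioniTiling}; if the wrap tile is the bar, the remaining cells $2,\dots,n-1$ form a linear $(n-2)$--board contributing $c_1\cdot A(\a_2^{n-1}, \b_2^{n-1}, \c_2^{n-1})$.

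The final step is to recognize $b_0\cdot A(\a_1^{n-1}, \b_1^{n-1}, \c_1^{n-1}) + c_1\cdot A(\a_2^{n-1}, \b_2^{n-1}, \c_2^{n-1})$ as $B(\a_0^{n-1}, \b_0^{n-1}, \c_0^{n-1})$. This is essentially implicit in the proof of the second bullet of \cref{TeoFrazioniTiling}: the tilings counted by $B$ are those of the enlarged board whose first tile is a domino or a bar, and splitting on which of the two it is produces exactly this sum (via the first and third bullets of the theorem). The main subtlety to handle carefully is the cyclic reinterpretation of the ``phantom'' indices $b_0$, $c_0$, $c_1$ and verifying that the hypothesis $c_0=0$ really does exhaust the wrap-around possibilities; this is clear for $n\geq 2$, and the small cases can be checked by hand.
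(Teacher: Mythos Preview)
Your proof is correct and follows essentially the same approach as the paper: both partition circular tilings by whether a tile crosses the seam between cells $n$ and $0$, identify the non-wrapping tilings with $A(\a_0^n,\b_0^n,\c_0^n)$, and identify the wrapping tilings with $B(\a_0^{n-1},\b_0^{n-1},\c_0^{n-1})$. The only cosmetic difference is that the paper invokes the combinatorial description of $B$ from \cref{TeoFrazioniTiling} directly (relabeling cell $n$ as cell $-1$), whereas you split the wrap-around case into domino and bar subcases and recover $B$ via the algebraic identity $B(\a_0^{n-1},\b_0^{n-1},\c_0^{n-1}) = b_0 A(\a_1^{n-1},\b_1^{n-1},\c_1^{n-1}) + c_1 A(\a_2^{n-1},\b_2^{n-1},\c_2^{n-1})$ coming from \eqref{RicorsioneBase}.
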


\begin{proof}
    $A(\a_0^n, \b_0^n, \c_0^n)$ counts the number of all the possible tilings where the cells $0$ and $n$ are not covered by the same stack of dominoes or bars. The tilings that are missing are the ones where a stack of dominoes covers $0$ and $n$ or a stack of bars covers $1,0,n$, (the only other possible case, where a stack of bars covers $0,n,n-1$) is impossible since $c_0=0$). In particular we notice that in both case the stack begins in the cell $n$.
    By the previous theorem $B(\a_0^{n-1}, \b_0^{n-1}, \c_0^{n-1})$ counts the number of tilings of a $n+1$ board starting from cell $-1$, starting with a stack of dominoes or bars. We can notice that this is the same of saying that the board starts with cell $n$ followed by the cell $0$.
\end{proof}

In the following proposition we show that the numerators of convergents of a MCF can be also seen in terms of permutations.

\begin{prop}
    If $a_0 = 4, b_1 = 1, c_2 = 1$ and $a_i=i+1$ for $i>0$, $b_i=i-1$ for $i>1$, and $c_i=i-2$ for $i>2$, then $A_n=(n+2)!+(n+1)!+n!$, i.e. $A_0=4, A_1=9, A_2=32,...$.
\end{prop}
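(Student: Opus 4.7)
The plan is to verify the claim by strong induction on $n$, leaning on the three-term recurrence
\[
A_n \;=\; a_n A_{n-1} + b_n A_{n-2} + c_n A_{n-3}
\]
established in Proposition~\ref{prop:conv}, equation~\eqref{eq:conv2}, which is valid for $n \geq 3$.

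For the base cases I would simply evaluate $A_0, A_1, A_2$ directly from the recurrences in~\eqref{RicorsioneBase} (or equivalently from the $A(\a_0^n,\b_0^n,\c_0^n)$ formulas already computed in the proof of Theorem~\ref{TeoFrazioniTiling}). Using $a_0=4$, $a_1=2$, $a_2=3$, $b_1=1$, $b_2=1$, $c_2=1$, one checks
\[
A_0 = 4 = 2!+1!+0!, \qquad A_1 = a_0 a_1 + b_1 = 9 = 3!+2!+1!,
\]
\[
A_2 = a_0 a_1 a_2 + a_0 b_2 + a_2 b_1 + c_2 = 32 = 4!+3!+2!,
\]
so the formula holds for $n=0,1,2$.

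For the inductive step, fix $n\geq 3$ and assume $A_{n-1}=(n+1)!+n!+(n-1)!$, $A_{n-2}=n!+(n-1)!+(n-2)!$, and $A_{n-3}=(n-1)!+(n-2)!+(n-3)!$. Plug these into the recurrence with $a_n = n+1$, $b_n = n-1$, $c_n = n-2$, and collect the coefficient of each $(n+1-j)!$ for $j=0,\dots,4$. After grouping, the coefficient of $(n-1)!$ is $(n+1)+(n-1)+(n-2)=3n-2$ and the coefficient of $(n-2)!$ is $(n-1)+(n-2)=2n-3$; using the identities $k\cdot k! = (k+1)!-k!$ and $(k+1)(k-1)!=k!+(k-1)!$ these terms telescope, and all contributions below $n!$ cancel cleanly, producing exactly $(n+2)!+(n+1)!+n!$.

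The only real obstacle is bookkeeping: making sure the $(n-1)!$, $(n-2)!$, and $(n-3)!$ contributions actually cancel. No genuinely new idea is required beyond the recurrence from Proposition~\ref{prop:conv}; the result can be viewed as an explicit closed form witnessing how $A_n$ interpolates between three consecutive factorials, a perspective that also explains the permutation interpretation alluded to in the statement.
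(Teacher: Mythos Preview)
Your proposal is correct and follows essentially the same approach as the paper: verify the base cases $A_0,A_1,A_2$ directly, then apply strong induction using the recurrence $A_n=a_nA_{n-1}+b_nA_{n-2}+c_nA_{n-3}$ from Proposition~\ref{prop:conv} with $a_n=n+1$, $b_n=n-1$, $c_n=n-2$. The only difference is organizational---you collect coefficients by factorial degree, whereas the paper simplifies each of the three summands $(n+1)A_{n-1}$, $(n-1)A_{n-2}$, $(n-2)A_{n-3}$ separately before combining---but both routes reduce to the same routine factorial manipulations.
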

\begin{proof}
    We prove the identity by induction. It is straightforward to check the thesis for $A_0, A_1, A_2$.
    We will now suppose $A_k = (k+2)!+(k+1)!+k!$ for every $k<n$ and prove the property for $n$.
    By $\eqref{eq:conv2}$ we have
    $$A_n = a_n A_{n-1} + b_n A_{n-2} + c_n A_{n-3}.$$
    From the definition of  $a_i$, $b_i$, $c_i$ and the inductive hypothesis we get
    \begin{multline*}
         A_n = (n+1)[(n+1)!+n!+(n-1)!] + (n-1)[n!+(n-1)!+(n-2)!]+\\ +(n-2)[(n-1)!+(n-2)!+(n-3)!].
    \end{multline*}
    We will deal with the three addends separately:
    \begin{multline*}
        (n+1)[(n+1)!+n!+(n-1)!] = (n+1)(n+1)!+(n+1)!+n!+(n-1)!\\
        =(n+2)!-(n+1)!+(n+1)!+n!+(n-1)!=(n+2)!+n!+(n-1)!,
    \end{multline*}
    $$(n-1)[n!+(n-1)!+(n-2)!] = (n-1)(n-2)!(n(n-1)+(n-1)+1)=n!n,$$
    $$(n-2)[(n-1)!+(n-2)!+(n-3)!] = (n-2)![(n-1)(n-2)+(n-2)+1] = (n-1)!(n-1).$$
    Summing all three equation we get
    $$A_n = (n+2)!+n!+(n-1)!+n!n+(n-1)!(n-1) = (n+2)!+(n+1)!+n!$$
\end{proof} 

\begin{oss}
The MCF of the previous proposition is
\[ [(4, 2, 3, 4, 5, 6, \ldots), (b_0, 1, 1, 2, 3, 4, \ldots), (1, c_1, 1, 1, 2, 3, \ldots)] \]
and the first sequence of convergents $\left(\frac{A_n}{C_n}\right)_{n \geq 0}$ appears to be convergent to the real number $4.54752\ldots$, but we were not able to explicitly determine this real number. In the case of classical continued fraction a similar situation happens for the continued fraction
\[ 2 + \cfrac{1}{1 + \cfrac{1}{2 + \cfrac{2}{3 + \cfrac{3}{4 + \ddots}}}} \]
whose convergents have as numerator the sequence $((n + 1)! + n!)_{n \geq 0}$ and in this case the continued fraction converges to $e$.
\end{oss}

\subsection{Negative Dominoes and Bars}
Now, we want to generalize \Cref{TeoFrazioniTiling} in order to allow negative $b_i, c_i$, following the ideas of \cite{EustisThesis}.

We notice that a positive $b_i$ adds $b_i$ number of ways to tile cells $i-1,i$. So a natural way to explain negative coefficient is to impose some restrictions such that a negative $b_i$ give us $|b_i|$ less way to cover the cells $i-1,i$. An analogous argument can be done for $c_i$.

\begin{defn}[Mixed Tiling]\label{DefMixedTiling}
Let  $(a_i)_{i \geq 0}$ be a sequence of positive integers and $(b_i)_{i \geq 0}$, $(c_i)_{i \geq 0}$ be sequences of integers such that
\begin{itemize}
    \item if $b_i<0$ and $c_i>0$, then $a_i>|b_i|$;
    \item if $b_i>0$ and $c_i<0$, then either $a_i>|c_i|$ or $b_i>|c_i|$;
    \item if $b_i<0$ and $c_i<0$, then $a_i>|b_i|+|c_i|$.
\end{itemize}
Then we define a \emph{mixed tiling} of an $(n + 1)-$board with height condition respectively given by $\a_0^n$, $\b_0^n$ and $\c_0^n$ as follows: for any $k \in \mathbb{N}$,
\begin{enumerate}
    \item if $b_k \ge 0$ and $c_k \ge 0$, we fall back in the same case defined at the beginning of \Cref{SectionCountingBase};
    \item if $b_k < 0$ and $c_k>0$, when there is a stack of $a_{k-1}$ squares in the cell $k-1$, we discard the tilings having up to $|b_k|$ squares in the cell $k$ and we refer to them as inadmissible tilings;
    \item if $c_k < 0$ and $b_k>0$, we have two cases:
     \begin{enumerate}
        \item if $a_k>|c_k|$, when there is a stack stack of $a_{k-2}$ squares in the cell $k-2$ and a stack of $a_{k-1}$ squares in the cell $k-1$, then we consider as inadmissible all the tilings having up to $|c_k|$ squares in the cell $k$;
        \item otherwise, necessarily $b_k>|c_k|$. In this case when there is a stack of $a_{k-2}$ squares in the cell $k-2$, the inadmissible tilings are those with up to $|c_k|$ dominoes covering the cells $k-1,k$;
    \end{enumerate}
    \item if $c_k < 0$ and $b_k<0$, we have two cases:
    \begin{enumerate}
    \item when at the same time there is a stack of $a_{k-2}$ squares in the cell $k-2$ and a stack of $a_{k-1}$ squares in the cell $k-1$, we discard all the tilings having up to $|c_k|+|b_k|$ squares in the cell $k$;
    \item when there is a stack of $a_{k-1}$ squares in the cells $k-1$, the inadmissible tilings have up to $|b_k|$ squares in the cell $k$.
    \end{enumerate}
\end{enumerate}
\end{defn}
\begin{oss}
Notice that the last condition applies when there are less than $a_{k-2}$ squares in the cell $k-2$ to compensate the negative $b_k$ as in the case 4a. 
\end{oss}

\begin{expl}
Consider the height conditions given by the following continued fraction: 
\[
(\a=[2,3,1,2,2,3],\b=[-,-1,3,3,2,-1],\c=[-,-,-2,2,1,-1]).
\]
In this case there are several restrictions given by these choice of conditions: 
\begin{itemize}
    \item Since $b_1=-1$, then when we have $a_0=2$ squares in position $0$, we need to exclude the case having $1$ square in the tile in position $1$ (see \cref{fig:n7}).
    \item Since $c_2=-2$ and $a_2=1<|c_2|$, then we are in the case 3b and we need to exclude the tilings having $2$ squares in position $0$ and $1$ or $2$ dominoes in the positions $1$ and $2$ (see \cref{fig:n8}).
    \item Finally, $b_5=c_5=-1$ so we are in the fourth case. Therefore the inadmissible tilings are those having $a_3=2$ squares in position $3$, $a_4=2$ squares in position $4$ and $1$ or $2$ squares in position $5$ (see \cref{fig:n9}). Moreover we also need to discard the tilings having $a_4=2$ squares in position $4$ and $|b_5|=1$ square in position $5$ (see \cref{fig:n10}).
\end{itemize}

\end{expl}

\begin{figure}[!hbt]
    \centering
\begin{subfigure}{.5\textwidth}
   \begin{tikzpicture}
\node[quadrato] (a) {} ;
\node[quadrato, above= 0cm of a] (a2) {} ;
\node[quadrato, fill=red, right =0cm of a] (b) {} ;
\node[quadrato, right =0cm of b] (c) {} ;
\node[domino,right=0cm of c] (d) {};
\node[domino,above=0cm of d] (d2) {};
\node[quadrato, right =0cm of d] (e) {} ;

\node [below=0.5cm of a.south west] (inizio) {}; 
\node [right=1cm of inizio.west] (n1) {}; 
\node [right=1cm of n1.west] (n2) {}; 
\node [below=0.5cm of c.south east] (n3) {}; 
\node [right=1cm of n3.west] (n4) {}; 
\node [below=0.5cm of d.south east] (n5) {}; 
\node [below=0.5cm of e.south east] (fine) {}; 
\draw[|-|,dashed] (inizio.north) -- node[label=below:$0$]{} (n1.north);
\draw[|-|,dashed] (n1.north)-- node[label=below:$1$]{} (n2.north); 
\draw[|-|,dashed] (n2.north)-- node[label=below:$2$]{}(n3.north);
\draw[|-|,dashed] (n3.north)-- node[label=below:$3$]{}(n4.north);
\draw[|-|,dashed] (n4.north)-- node[label=below:$4$]{}(n5.north); 
\draw[|-|,dashed] (n5.north)-- node[label=below:$5$]{}(fine.north);

\end{tikzpicture}
\caption{Non-admissible tiling (case 2).}
\label{fig:n7}
\end{subfigure}%
\hfill
\begin{subfigure}{.5\textwidth}
  \begin{tikzpicture}
\node[quadrato] (a) {} ;
\node[quadrato, above= 0cm of a] (a2) {} ;
\node[domino, fill=red, right =0cm of a] (b) {};
\node[domino,right=0cm of b] (c) {};
\node[domino,above=0cm of c] (c2) {};
\node[quadrato,right=0cm of c] (d) {};

\node [below=0.5cm of a.south west] (inizio) {}; 
\node [right=1cm of inizio.west] (n1) {}; 
\node [right=1cm of n1.west] (n2) {}; 
\node [right=1cm of n2.west] (n3) {}; 
\node [right=1cm of n3.west] (n4) {}; 
\node [right=1 cm of n4.west] (n5) {}; 
\node [below=0.5cm of d.south east] (fine) {}; 
\draw[|-|,dashed] (inizio.north) -- node[label=below:$0$]{} (n1.north);
\draw[|-|,dashed] (n1.north)-- node[label=below:$1$]{} (n2.north); 
\draw[|-|,dashed] (n2.north)-- node[label=below:$2$]{}(n3.north);
\draw[|-|,dashed] (n3.north)-- node[label=below:$3$]{}(n4.north);
\draw[|-|,dashed] (n4.north)-- node[label=below:$4$]{}(n5.north); 
\draw[|-|,dashed] (n5.north)-- node[label=below:$5$]{}(fine.north); 

\end{tikzpicture}
\caption{Non-admissible tiling (case 3b).}
\label{fig:n8}
\end{subfigure}%
\end{figure}

\begin{figure}[!hbt]
\addtocounter{figure}{-1} 
    \centering
\begin{subfigure}{.5\textwidth}
 \addtocounter{subfigure}{2}
   \begin{tikzpicture}
\node[quadrato] (a) {} ;
\node[quadrato, above= 0cm of a] (a2) {} ;
\node[domino, right =0cm of a] (b) {} ;
\node[domino, above =0cm of b] (b2) {} ;
\node[domino, above =0cm of b2] (b3) {} ;
\node[quadrato, right =0cm of b] (c) {} ;
\node[quadrato, above =0cm of c] (c2) {} ;
\node[quadrato,right=0cm of c] (d) {};
\node[quadrato,above=0cm of d] (d2) {};
\node[quadrato, fill=red, right =0cm of d] (e) {} ;
\node[quadrato, fill=red, above =0cm of e] (e2) {} ;

\node [below=0.5cm of a.south west] (inizio) {}; 
\node [right=1cm of inizio.west] (n1) {}; 
\node [right=1cm of n1.west] (n2) {}; 
\node [below=0.5cm of b.south east] (n3) {}; 
\node [right=1cm of n3.west] (n4) {}; 
\node [below=0.5cm of d.south east] (n5) {}; 
\node [below=0.5cm of e.south east] (fine) {}; 
\draw[|-|,dashed] (inizio.north) -- node[label=below:$0$]{} (n1.north);
\draw[|-|,dashed] (n1.north)-- node[label=below:$1$]{} (n2.north); 
\draw[|-|,dashed] (n2.north)-- node[label=below:$2$]{}(n3.north);
\draw[|-|,dashed] (n3.north)-- node[label=below:$3$]{}(n4.north);
\draw[|-|,dashed] (n4.north)-- node[label=below:$4$]{}(n5.north); 
\draw[|-|,dashed] (n5.north)-- node[label=below:$5$]{}(fine.north);

\end{tikzpicture}
\caption{Non-admissible tiling (case 4a).}
\label{fig:n9}
\end{subfigure}%
\hfill
\begin{subfigure}{.5\textwidth}
    \begin{tikzpicture}
\node[quadrato] (a) {} ;
\node[quadrato, above= 0cm of a] (a2) {} ;
\node[domino, right =0cm of a] (b) {} ;
\node[domino, above =0cm of b] (b2) {} ;
\node[domino, above =0cm of b2] (b3) {} ;
\node[quadrato, right =0cm of b] (c) {} ;
\node[quadrato,right=0cm of c] (d) {};
\node[quadrato,above=0cm of d] (d2) {};
\node[quadrato, fill=red, right =0cm of d] (e) {} ;

\node [below=0.5cm of a.south west] (inizio) {}; 
\node [right=1cm of inizio.west] (n1) {}; 
\node [right=1cm of n1.west] (n2) {}; 
\node [below=0.5cm of b.south east] (n3) {}; 
\node [right=1cm of n3.west] (n4) {}; 
\node [below=0.5cm of d.south east] (n5) {}; 
\node [below=0.5cm of e.south east] (fine) {}; 
\draw[|-|,dashed] (inizio.north) -- node[label=below:$0$]{} (n1.north);
\draw[|-|,dashed] (n1.north)-- node[label=below:$1$]{} (n2.north); 
\draw[|-|,dashed] (n2.north)-- node[label=below:$2$]{}(n3.north);
\draw[|-|,dashed] (n3.north)-- node[label=below:$3$]{}(n4.north);
\draw[|-|,dashed] (n4.north)-- node[label=below:$4$]{}(n5.north); 
\draw[|-|,dashed] (n5.north)-- node[label=below:$5$]{}(fine.north);

\end{tikzpicture}
\caption{Non-admissible tiling (case 4b).}
\label{fig:n10}
\end{subfigure}%
 \caption{Some examples of non-admissible tilings.}
 \label{fig:n11}
\end{figure}
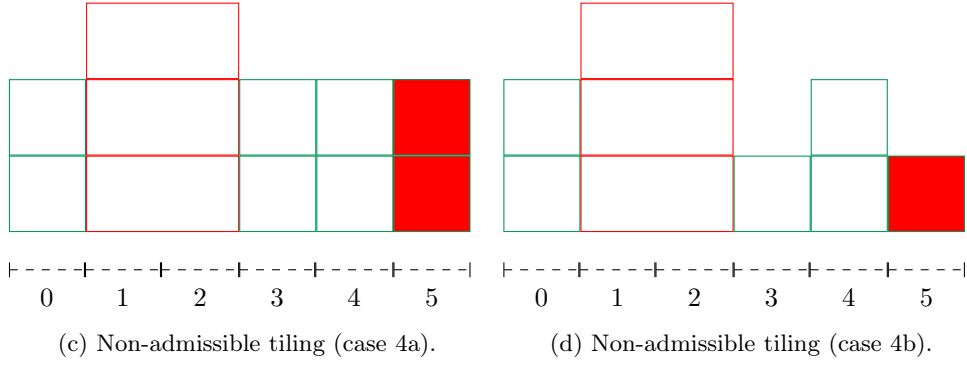
\FloatBarrier

\begin{thm}\label{TeoMixedTiling}
Consider the height conditions given by $\left( \frac{A(\a_0^n, \b_0^n, \c_0^n)}{C(\a_0^n, \b_0^n, \c_0^n)}, \frac{B(\a_0^n, \b_0^n, \c_0^n)}{C(\a_0^n, \b_0^n, \c_0^n)} \right) := [\a_0^n, \b_0^n, \c_0^n]$ such that the conditions in \cref{DefMixedTiling} hold. 
Then $A(\a_0^n, \b_0^n, \c_0^n)$ is the number of mixed tilings with height conditions $\a_0^n$, $\b_0^n$, and $\c_0^n$.
\end{thm}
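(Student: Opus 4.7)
The plan is to establish $M(\a_0^n, \b_0^n, \c_0^n) = A(\a_0^n, \b_0^n, \c_0^n)$ by strong induction on $n$, following the template of \cref{TeoFrazioniTiling} but with sign-sensitive bookkeeping. Writing $M_n$ as shorthand for $M(\a_0^n, \b_0^n, \c_0^n)$, I would begin by verifying the base cases $n=0,1,2$ through direct enumeration of admissible mixed tilings and comparison with $A_0 = a_0$, $A_1 = a_0 a_1 + b_1$, and $A_2 = a_0 a_1 a_2 + a_0 b_2 + a_2 b_1 + c_2$. The hypotheses in \cref{DefMixedTiling} are calibrated precisely so that the inadmissible configurations subtract the correct amount in each of these small cases.

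For the inductive step, I would invoke \eqref{eq:conv2}, which gives $A_n = a_n A_{n-1} + b_n A_{n-2} + c_n A_{n-3}$, and show that $M_n$ satisfies the same recursion. The key preliminary is an auxiliary claim: the number of mixed tilings of cells $0, \ldots, j$ whose cell $j$ carries a maximal stack of $a_j$ squares equals $M_{j-1}$, and the number of those with cells $j-1$ and $j$ both maximally filled by squares equals $M_{j-2}$. This holds because the inequalities $a_k > |b_k|$, $a_k > |c_k|$, and $a_k > |b_k|+|c_k|$ in \cref{DefMixedTiling} are exactly what is needed to ensure that a maximally stacked square column is never itself among the forbidden patterns, so appending full square stacks at the right end of any admissible tiling preserves admissibility.

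With this claim in hand, I would perform a case split on the signs of $b_n$ and $c_n$. If $b_n \ge 0$ and $c_n \ge 0$, the decomposition by the rightmost tile (square, domino, or bar stack) from \cref{TeoFrazioniTiling} gives the recursion unchanged. If $b_n < 0$ and $c_n \ge 0$, only square and bar closings are allowed; the forbidden square closings consist of a full stack at cell $n-1$ followed by $1,\ldots,|b_n|$ squares at cell $n$, numbering $|b_n| M_{n-2}$ by the auxiliary claim, so $M_n = a_n M_{n-1} - |b_n| M_{n-2} + c_n M_{n-3}$. The mixed case $b_n \ge 0$, $c_n < 0$ splits into sub-cases $3a$ and $3b$ according to whether the subtraction is absorbed by the square column or the domino column at cell $n$; in either sub-case the forbidden count is $|c_n| M_{n-3}$ (the auxiliary claim applied to configurations fixing max squares at cells $n-2,n-1$ in $3a$, or at cell $n-2$ alone in $3b$). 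All three scenarios collapse to $M_n = a_n M_{n-1} + b_n M_{n-2} + c_n M_{n-3}$.

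The main obstacle is case $4$, where $b_n, c_n < 0$ and the only admissible closing is a square stack, but two forbidden families interact: $F_{4a}$ consisting of tilings with cells $n-2, n-1$ fully stacked and cell $n$ carrying $\le |b_n|+|c_n|$ squares, and $F_{4b}$ consisting of tilings with cell $n-1$ fully stacked and cell $n$ carrying $\le |b_n|$ squares. Here I would invoke inclusion-exclusion: the auxiliary claim yields $|F_{4a}| = (|b_n|+|c_n|) M_{n-3}$, $|F_{4b}| = |b_n| M_{n-2}$, and $|F_{4a} \cap F_{4b}| = |b_n| M_{n-3}$, since the intersection has cells $n-2$ and $n-1$ both maximal and cell $n$ carrying at most $|b_n|$ squares. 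Therefore $|F_{4a} \cup F_{4b}| = |b_n| M_{n-2} + |c_n| M_{n-3}$, and subtracting this from $a_n M_{n-1}$ recovers $a_n M_{n-1} + b_n M_{n-2} + c_n M_{n-3} = A_n$. The delicate step is verifying that the inclusion-exclusion bookkeeping matches the recursion exactly and that the hypothesis $a_n > |b_n|+|c_n|$ ensures no subtraction exceeds its corresponding positive contribution, making the count a genuine nonnegative integer.
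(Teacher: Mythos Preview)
Your proposal is correct and follows essentially the same route as the paper's proof: verify the base cases $n=0,1,2$, then split on the signs of $b_n,c_n$ and show that $M_n$ satisfies the recursion \eqref{eq:conv2}, with the paper's double-counting argument in case~4 being exactly your inclusion--exclusion on $F_{4a}$ and $F_{4b}$. Your explicit auxiliary claim (that a maximal square stack at the right end restricts bijectively to a shorter admissible tiling) is left implicit in the paper; one small caveat is that in sub-case~3b you need not have $a_k>|c_k|$, but the claim still holds there because the forbidden pattern concerns dominoes rather than squares.
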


\begin{proof}
    In the following proof we will exclude the case of $b_n$ and $c_n$ being both not negative, since it follows easily by \cref{TeoFrazioniTiling}. 

    First we want to show that the number of mixed tiling $M(\a_0^n, \b_0^n, \c_0^n)$ satisfies the same initial condition and recurrence relations of $A(\a_0^n, \b_0^n, \c_0^n)$. 
    \begin{itemize}
        \item If $n=0$ we trivially have $M(a_0, b_0, c_0) = a_0 =A(a_0, b_0, c_0)$.
        \item If $n=1$ we have $M(\a_0^1, \b_0^1, \c_0^1)= a_0a_1+b_1 = A(\a_0^1, \b_0^1, \c_0^1)$, since $b_1 <0$ we may cover using only squares, that are $a_0a_1$, but we need to subtract $|b_1|$ inadmissible tilings, when we have $a_0$ squares in the cell in position 0 and less than $|b_1|+1$ squares in the cell in position 1.
      \item If $n=2$ we have $M(\a_0^2, \b_0^2, \c_0^2) = a_0a_1a_2+a_0b_2+a_2b_1+c_2 = A(\a_0^2, \b_0^2, \c_0^2)$, indeed $a_0a_1a_2$ is the total number of tilings consisting in only squares, $a_0b_2$ and $b_1a_2$ are the number of tilings involving a stack of squares and a stack of dominoes that we need to add (when $b_i\ge 0$) or subtract (when $b_i<0$). Finally $c_2$ is the number of tiling using only bars we need to add (when $c_2 \ge 0$)) or the number of tilings we need to subtract ($c_2<0$).
    \end{itemize}
    
    We now need to prove that $M$ has the same recurrence property expressed in Proposition \ref{prop:conv}.
 
    \begin{itemize}
    \item If $b_n<0$ and $c_n>0$, then every tiling must finish either with a stack of squares or a stack of bars. 
    By induction there are $c_nM(\a_0^{n-3}, \b_0^{n-3}, \c_0^{n-3})$ tilings that end with a stack of bars and $a_nM(\a_0^{n-1}, \b_0^{n-1}, \c_0^{n-1})$ tilings that end with a stack of squares, ignoring the condition stated in \cref{DefMixedTiling}. Among these, we need to subtract $|b_n|M(\a_0^{n-2}, \b_0^{n-2}, \c_0^{n-2})$ inadmissible tiling, namely those having a stack of $a_{n-1}$ squares in the cell $n-1$ and less than $|b_n|+1$ squares in the cell $n$.
    
    \item If $b_n>0$ and $c_n<0$ then every tiling must finish either with a stack of squares or a stack of dominoes. By induction these are respectively  $a_nM(\a_0^{n-1}, \b_0^{n-1}, \c_0^{n-1})$ and $b_nM(\a_0^{n-2}, \b_0^{n-2}, \c_0^{n-2})$. Now we need to distinguish two possible cases:
    \begin{itemize}
   \item If $a_n>|c_n|$, then we need to subtract $|c_n|M(\a_0^{n-3}, \b_0^{n-3}, \c_0^{n-3})$ inadmissible tilings, i.e. those having a stack of $a_{n-1}$ squares in  cell $n-1$, $a_{n-2}$ squares in the cell $n-2$ and less than $|c_n|+1$ squares in  cell $n$. 
   \item If $a_n\le|c_n|$, then $b_n>|c_n|$ by hypothesis and so we need to subtract  $|c_n|M(\a_0^{n-3}, \b_0^{n-3}, \c_0^{n-3})$ inadmissible tiling, which in this case are those having stack of $a_{n-2}$ squares in the cell $n-2$ and less than $|c_n|+1$ dominoes covering the cells in positions $n-1$, $n$.
   \end{itemize}
   
    \item Finally, if $c_n<0$ and $b_n<0$ then every tiling must finish with a stack of squares. By induction there are $a_nM(\a_0^{n-1}, \b_0^{n-1}, \c_0^{n-1})$ tilings that end with a stack of squares.  From this number we need to subtract $|b_n|M(\a_0^{n-2}, \b_0^{n-2}, \c_0^{n-2})$ inadmissible tilings, those when there is a stack of $a_{n-1}$ squares in the cell $n-1$ and less than $|b_n|+1$ squares in the cell $n$. Moreover we also need to subtract $(|b_n|+|c_n|)M(\a_0^{n-3}, \b_0^{n-3}, \c_0^{n-3})$ inadmissible tilings, i.e. when there is are stacks of $a_{n-1}$ and $a_{n-2}$ squares in the cells $n-1$ and $n-2$ respectively and less than $|c_n|+|b_n|+1$ squares in the cell $n$. 
    However in this way we have counted twice the tilings having full stacks of $a_{n-1}$ and $a_{n-2}$ squares in the cells $n-1$ and $n-2$, and less than $|b_n|+1$ squares in the last cell, so we have to add up this coverings again. These are a total of $|b_n|M(\a_0^{n-3}, \b_0^{n-3}, \c_0^{n-3})$ tiling, obtaining the result stated by the thesis.
\end{itemize}

\end{proof}

\begin{oss}
The Jacobi algorithm has been generalized to higher dimensions by Perron \cite{Per} as follows:
\[
\begin{cases}
a_n^{(i)} = \lfloor \alpha_n^{(i)} \rfloor \cr
\alpha_{n+1}^{(1)} = \cfrac{1}{\alpha_n^{(m)} - a_n^{(m)}} \cr
\alpha_{n+1}^{(i)} = \cfrac{\alpha_n^{(i-1)} - a_n^{(i-1)}}{\alpha_n^{(m)} - a_n^{(m)}}
\end{cases}
\quad n = 0, 1, 2, \ldots
\]
starting from $m$ real numbers $\alpha_0^{(1)}, \ldots, \alpha_0^{(m)}$ and providing a MCF $$[(a_0^{(1)}, a_1^{(1)}, \ldots), \ldots, (a_0^{(m)}, a_1^{(m)}, \ldots)]$$ of degree $m$ which is defined by the following relation
\[
\begin{cases}
\alpha_n^{(i-1)} = a_n^{(i-1)} + \cfrac{\alpha_{n+1}^{i}}{\alpha_{n+1}^{(1)}}, \quad i = 2, \ldots, m \cr
\alpha_n^{(m)} = a_n^{(m)} + \cfrac{1}{\alpha_{n+1}^{(1)}}
\end{cases}
\quad n = 0, 1, 2, \ldots
\]
Our results about the MCF of degree 2 easily extends to a MCF of degree $m$ by considering $m + 1$ different tiles of length $1, 2, \ldots, m + 1$. In this paper we deal with the case of degree 2 for the seek of simplicity about the notation. 
\end{oss}

\end{document}